\theoremstyle{plain}
\newtheorem{thm}{Theorem}[section]
\newtheorem{pro}[thm]{Proposition}
\theoremstyle{definition}
\newtheorem{defn}[thm]{Definition}
\newtheorem{ex}[thm]{Example}
\numberwithin{equation}{section}
\newcommand{\R}{\mathbb{R}}
\newcommand{\N}{\mathbb{N}}
\begin{document}

\title[Positive solutions for nonlocal elliptic systems]{Nonzero positive solutions of nonlocal elliptic systems with functional BCs}  

\date{}

\author[G. Infante]{Gennaro Infante}
\address{Gennaro Infante, Dipartimento di Matematica e Informatica, Universit\`{a} della
Calabria, 87036 Arcavacata di Rende, Cosenza, Italy}%
\email{gennaro.infante@unical.it}%

\begin{abstract} 
We discuss the existence and non-existence of non-negative weak solutions for second order nonlocal elliptic systems subject to functional boundary conditions. Our approach is based on classical fixed point index theory combined with some recent results by the author.
\end{abstract}

\subjclass[2010]{Primary 35J47, secondary 35B09, 35J57, 35J60, 47H10}

\keywords{Positive solution, nonlocal elliptic system, functional boundary condition, cone, fixed point index}

\maketitle

\section{Introduction}
There has been growing attention to the solvability of elliptic equations where nonlocal terms occur, one motivation being that these kind of equations often occur in applications. A widely studied case is the one of Kirchoff-type equations, see for example the review by Ma~\cite{ToFu}. Under Dirichlet boundary conditions (BCs) the equation
\begin{equation} \label{nonlocal}
-a\left(\int_{\Omega} |u|^p\,dx\right)\Delta u=  \lambda f(x,u),  \ x\in \Omega,
\end{equation}
has been studied, for example, by Corr\^{e}a and co-authors~\cite{Correa104, Correa204},  Jiang and Zhai~\cite{JiangZhai} and by Yan and co-authors~\cite{YanMa, YanRen, YanWang}. Here $\Omega\subset \mathbb{R}^m$, $m\geq 1$, is a domain with sufficiently smooth boundary, $a$ is a positive function, $p\in \mathbb{R}$. 

Note that in the local framework (that corresponds to $a\equiv 1$ in~\eqref{nonlocal}) one regains the classical Gelfand-type problem
$$
-\Delta u=  \lambda f(x,u),  \ x\in \Omega,\quad u |_{\partial \Omega} =0,
$$
we refer to the Introduction of~\cite{Bonanno} for a recent review on this topic.

By means of the classical Krasnosel'ski\u\i{}-Guo fixed-point theorem Sta\'nczy~\cite{Stanczy}, when $\Omega$  is a ball or an annular domain, studied the equation 
\begin{equation*}% \label{nonlocal-stan}
-\Bigl(\int_{\Omega} f(u)\,dx\Bigr)^p\Delta u=\lambda  (f(u))^{q},  \ x\in \Omega,
\end{equation*}
a setting that covers, for example, the celebrated mean field equation 
\begin{equation}\label{mfq}
-\Delta u=  \lambda \dfrac{e^{u}}{\int_{\Omega}e^{u}\,dx }, \  x\in \Omega,
\end{equation}
we refer the reader to~\cite{CLMP, EGO} for further details on the equation~\eqref{mfq} in dimension~$2$.

Arcoya and co-authors~\cite{Arcoya}, by means of a Bolzano theorem, studied the equation
\begin{equation*}% \label{nonlocal-ar}
-\Bigl(\int_{\Omega} g(u)\,dx\Bigr)^p\Delta u= f(u),  \ x\in \Omega,
\end{equation*}
while  Corr\^{e}a and de Morais Filho~\cite{Correa05} studied the equation
\begin{equation*} %\label{nonlocal-galerkin}
-\Bigl(\int_{\Omega} g(x,u)\,dx\Bigr)^p\Delta u= (f(x,u))^{q},  \ x\in \Omega,
\end{equation*}
via the Galerkin method. 

In the radial case, also by topological methods, Fija\l kowski and Przeradzki~\cite{Bogdan} and Engui\c{c}a and Sanchez~\cite{Luis} studied the equation
\begin{equation} \label{fpes}
-\Delta u=f\Bigl(u,\int_{\Omega} g(u)\,dx\Bigr);
\end{equation}
note that in \eqref{fpes} a nonlocal term occurs within the nonlinearity $f$, in what follows we shall consider a similar setting.

It is worth to mention that the fairly general equation
$$
-\mathcal{A}(x,u ) \Delta u= \lambda f(u), \  x\in \Omega,
$$
where $\mathcal{A}$ is a suitable functional defined on $\Omega \times L^{p}(\Omega )$,
 has been studied by Chipot and co-authors~\cite{Chipot09, Chipot14}, while 
Faraci and Iannizzotto~\cite{far-ian} studied the equation 
$$
-\Delta u-\mu_1 u=A[u]  f(u), \  x\in \Omega,
$$
where $\mu_1$ is the principal eigenvalue of the Laplacian with Dirichlet BCs and $A[u]$ is a nonlinear functional. The interesting case of  $p$-Laplacian equations with nonlocal terms has been recently discussed by Santos and co-authors in~\cite{Santos1,Santos2}.

The common feature of the above  mentioned problems is the requirement that the solution vanishes on the boundary of $\Omega$. Non-homogeneous BCs
in the case of nonlocal elliptic equations have been recently studied by Wang and An~\cite{WangAn} and Morbach and Corr\~{e}a~\cite{MorCor}. 

In the context of~\emph{systems} of nonlocal elliptic equations, we mention the papers by Chen and Gao~\cite{ChenGao05} and the recent paper by do \'O et al~\cite{do15}. In particular in the latter paper the authors study, in the radial case and
by topological methods, the system
\begin{equation*}%  \label{dosyst}
 \left\{
\begin{array}{ccc}
-a_i\left(\int_{\Omega} |u_i|^{p_{i}}\,dx\right)\Delta u_i= \lambda_i f_i(|x|, u),  & x\in \Omega, & i=1,2,\ldots,n,\\
u_i=0, & x\in \partial \Omega, & i=1,2,\ldots,n.
\end{array}
\right.
\end{equation*}

Here we adapt the arguments of~\cite{gi-tmna}, valid for~\emph{local} differential equations, in order to study the solvability of the system of second order elliptic functional differential equations subject to functional BCs
\begin{equation}
  \label{nellbvp-intro}
 \left\{
\begin{array}{ccc}
   L_i u_i=\lambda_i f_i(x,u, w_i [u]), &  x\in \Omega, & i=1,2,\ldots,n, \\
 B_i u_i=\eta_i  \zeta_i (x) h_{i} [u], & x\in \partial \Omega, & i=1,2,\ldots,n,
\end{array}
\right.
\end{equation}
where  $\Omega\subset \R^m$ ($m\geq 2$) is a bounded domain with a sufficiently smooth boundary, $L_i$ is a strongly uniformly elliptic operator, $B_i$ is a first order boundary operator, 
 $u=(u_1,\dots, u_n)$, $f_i$ are continuous functions, $\zeta_i$ are sufficiently regular functions, $w_{i}, h_{i}$ are suitable compact functionals, $\lambda_i, \eta_i$ are nonnegative parameters. The setting for the BCs covers, for example, the special cases of \emph{linear} (\emph{multi-point} or \emph{integral}) BCs of the form
\begin{equation*}
h_{i}[u]=\sum_{k=1}^{n} \sum_{j=1}^{N}\hat{\alpha}_{ijk}u_k(\omega_j),\ \text{or}\
h_{i}[u]=\sum_{k=1}^{n}\int_{\Omega}\hat{\alpha}_{ik}(x)u_k(x)\,dx,
\end{equation*}
where  $\hat{\alpha}_{ijk}$ are non-negative coefficients, $\omega_j\in \Omega$, $\hat{\alpha}_{ik}$ are non-negative continuous functions on $\overline{\Omega}$.
There exists a wide literature on these kind of BCs, we refer the reader to the reviews~\cite{Cabada1, Conti, rma, sotiris, Stik, Whyburn} and the papers~\cite{kttmna, ktejde, Pao-Wang, Picone, jw-gi-jlms}. We point that that this setting can also be applied to nonlinear, nonlocal BCs, which have seen recently attention in the framework of (local) elliptic equations, we refer the reader to the papers by Cianciaruso and co-authors~\cite{genupa}
and Goodrich~\cite{Goodrich1, Goodrich2, Goodrich3, Goodrich4}.

Here we discuss, under fairly general conditions,  the existence and non-existence of positive solutions of the system~\eqref{nellbvp-intro}. Our approach relies on classical fixed point index theory. We present some applications of the theoretical results to nonlocal elliptic systems, where we illustrate the variety of BCs that can be approached via this method. Our results are new and complement the results of~\cite{do15}, by considering non-radial cases, by allowing the presence of functional BCs and by permitting, in the non-local terms of differential equations, an interaction between all the components of the system. We also improve the results in~\cite{gi-tmna} in the case of~\emph{local} elliptic equations, by weakening the assumptions on the~BCs.

\section{Existence and non-existence results}
In what follows, for every $\hat{\mu}\in (0,1)$ we denote by $C^{\hat{\mu}}(\overline{\Omega})$ the space of all $\hat{\mu}$-H\"{o}lder continuous functions  $g:\overline{\Omega}\to \mathbb{R}$ and, for every $k\in \mathbb{N}$, we denote by $C^{k+\hat{\mu}}(\overline{\Omega})$ the space of all functions  $g\in C^{k}(\overline{\Omega})$ such that all the partial derivatives of $g$ of order $k$ are $\hat{\mu}$-H\"{o}lder continuous in $\overline{\Omega}$ (for more details see \cite[Examples~1.13 and~1.14]{Amann-rev}).

We make the following assumptions on the domain $\Omega$ and the operators $L_i$ and $B_i$ and the functions $\zeta_i$ that occur in~\eqref{nellbvp-intro}
 (see \cite[Section~4 of Chapter~1]{Amann-rev} and \cite{Lan1, Lan2})): 
\begin{enumerate}
\item $\Omega\subset \R^m$, $m\ge 2$, is a bounded domain such that its boundary $\partial \Omega$ is an $(m-1)$-dimensional $C^{2+\hat{\mu}}-$manifold for some $\hat{\mu}\in (0,1)$, such that $\Omega$ lies locally on one side of $\partial \Omega$ (see \cite[Section 6.2]{zeidler} for more details).
\item $L_i$ is a the second order elliptic operator given by
\begin{equation*}
L_i u(x)=-\sum_{j,l=1}^m a_{ijl}(x)\frac{\partial^2 u}{\partial x_j \partial x_l}(x)+\sum_{j=1}^m a_{ij}(x) \frac{\partial u}{\partial x_j} (x)+a_i (x)u(x), \quad \mbox{for $x\in \Omega$,}
\end{equation*}
where $a_{ijl},a_{ij},a_i\in C^{\hat{\mu}}(\overline{\Omega})$ for $j,l=1,2,\ldots,m$, $a_i(x)\ge 0$ on $\bar{\Omega}$, $a_{ijl}(x)=a_{ijl}(x)$ on $\bar{\Omega}$ for $j,l=1,2,\ldots,m$.  Moreover $L_i$ is strongly uniformly elliptic; that is, there exists $\bar{\mu}_{i0}>0$ such that 
$$\sum_{j,l=1}^m a_{ijl}(x)\xi_j \xi_l\ge \bar{\mu}_{i0} \|\xi\|^2 \quad \mbox{for $x\in \Omega$ and $\xi=(\xi_1,\xi_2,\ldots,\xi_m)\in\R^m$.}$$

\item $B_i$ is a boundary operator given by
$$B_i u(x)=b_i(x)u(x)+\delta_i \frac{\partial u}{\partial \nu}(x) \quad \mbox{for $x\in\partial \Omega$},$$
where $\nu$ is an outward pointing and nowhere tangent vector field on $\partial \Omega$ of class $C^{1+\hat{\mu}}$ (not necessarily a unit vector field),  $\frac{\partial u}{\partial \nu}$ is the directional derivative of $u$ with respect to $\nu$,  $b_i:\partial \Omega \to \R$ is of class $C^{1+\hat{\mu}}$ and moreover one of the following conditions holds:
\begin{enumerate}
\item $\delta_i =0$ and $b_i(x)\equiv 1$ (Dirichlet boundary operator).
\item $\delta_i =1$, $b_i(x)\equiv 0$ and $a_i(x)\not\equiv 0$ (Neumann boundary operator).
\item $\delta_i =1$, $b_i(x)\ge 0$ and $b_i(x)\not\equiv 0$ (Regular oblique derivative boundary operator).
\end{enumerate}

\item $\zeta_i \in  C^{2-\delta_i+\hat{\mu}}(\partial\Omega)$. 

\end{enumerate}

Under the previous conditions (see  \cite{Amann-rev}, Section 4 of Chapter 1) a strong maximum principle holds,
given $g\in C^{\hat{\mu}}(\bar{\Omega})$, the BVP 
 \begin{equation}
  \label{eqelliptic}
 \left\{
\begin{array}{ll}
   L_{i}u(x)=g(x), &  x\in \Omega, \\
 B_{i}u(x)=0, & x\in \partial \Omega,
\end{array}
\right.
\end{equation}
admits a unique classical solution $u\in C^{2+\hat{\mu}}(\bar{\Omega})$ and, moreover, given $\zeta_i \in  C^{2-\delta_i+\hat{\mu}}(\partial\Omega)$  the BVP 
 \begin{equation}
  \label{eqellipticbc}
 \left\{
\begin{array}{ll}
   L_{i}u(x)=0, &  x\in \Omega, \\
 B_{i}u(x)=\zeta_i(x), & x\in \partial \Omega,
\end{array}
\right.
\end{equation}
also admits a unique solution $\gamma_i \in C^{2+\hat{\mu}}(\bar{\Omega})$.

We recall that a \emph{cone} $P$ of a
real Banach space $X$ is a closed set with $P+P\subset P$,
  $\lambda P\subset P$ for all $\lambda\ge 0$ and  $P\cap(-P)=\{0\}$. A cone $P$ induces a partial ordering in
$X$ by means of the relation
$
x\le y \  \mbox{if and only if $y-x\in P$}.
$
The cone $P$ is {\it normal} if there exists $d>0$ such that for all $x, y\in X$ with $0 \le x\le y$ then
$\|x\|\le d \|y\| $.
Note that every cone $P$ has the Archimedean property; that is, $n x\le y$ for all $n\in \N$ and some $y\in X$ implies $x\le 0$. In what follows, with abuse of notation, we will use the same symbol ``$\ge$" for the different cones appearing in the paper.

In order to seek solutions of the system~\eqref{nellbvp-intro}, we make use of the cone of non-negative functions $P=C(\bar{\Omega},\R_+)$. The solution operator associated to the BVP~\eqref{eqelliptic}, $K_i:C^{\hat{\mu}}(\bar{\Omega})\to C^{2+\hat{\mu}}(\bar{\Omega})$, defined as $K_{i}g=u$ is linear and continuous. It is known (see \cite{Amann-rev}, Section 4 of Chapter 1) that $K_{i}$ can be extended uniquely to a continuous, linear and compact operator (that we denote again by the same name) $K_{i}:C(\bar{\Omega})\to C(\bar{\Omega})$ that leaves the cone $P$ invariant, that is $K_{i}(P)\subset P$.
We denote by $r(K_{i})$ the spectral radius of $K_{i}$. 
It is known (for details see Lemma~3.3 of~\cite{Lan2}) that  $r(K_{i})\in (0,+\infty)$ and there exists 
$\varphi_{i}\in P\setminus \{0\}$  such that 
\begin{equation}\label{igenfun}
\varphi_{i}=\mu_{i}K_{i}\varphi_{i},
\end{equation}{}
where $\mu_{i}=1/r(K_{i})$.

We utilize the space $C(\bar{\Omega},\R^n)$, endowed with the norm $\|u\|:=\displaystyle\max_{i=1,2,\ldots,n} \{\|u_i\|_{\infty}\}$, where $\|z\|_{\infty}=\displaystyle\max_{x\in \bar{\Omega}}|z(x)|$, and consider (with abuse of notation) the cone $P=C(\bar{\Omega},\R^n_+)$. Given $J=\prod_{i=1}^n J_i\subset \R^n_+,$ where each $J_i\subset \R$ is a closed nonempty interval, we define
$$P_J=\{u\in P: u(x)\in J \ \text{for all}\ x\in \bar{\Omega}\}.$$

We now fix $I=\prod_{i=1}^n [0,\rho_i]$, where $\rho_i\in (0,+\infty)$, and rewrite the elliptic system~\eqref{nellbvp-intro} as a fixed point problem by considering the operators $T,\Gamma:C(\bar{\Omega}, I) \to  C(\bar{\Omega},\R^n)$ given by 
\begin{align}
T(u):=(\lambda_i K_i F_i(u))_{i=1..n},\quad
\Gamma (u):=(\eta_i \gamma_i h_{i}[u] )_{i=1..n},
\end{align}
where $K_i$ is the above mentioned extension of the solution operator associated to~\eqref{eqelliptic},
 $\gamma_i \in C^{2+\hat{\mu}}(\overline{\Omega})$ is the unique solution of the BVP~\eqref{eqellipticbc} and
$$F_i(u)(x):=f_i(x,u(x), w_i[u]),\ \text{for}\ u\in C(\bar{\Omega}, I)\ \text{and}\ x\in \bar{\Omega}.$$

\begin{defn} We say that $u\in C(\bar{\Omega}, J)$ is a {\it weak solution} of the system~\eqref{nellbvp-intro} if and only if $u$ is a fixed point of the operator $T+\Gamma$, that is, 
$$
u=Tu+\Gamma u=(\lambda_i K_i F_i(u)+\eta_i \gamma_i h_{i}[u] )_{i=1..n};
$$
if, furthermore, the components of $u$ are non-negative with $u_j\not\equiv 0$ for some $j$ we say that $u$ is a \emph{nonzero positive solution} of the system~\eqref{nellbvp-intro}.
\end{defn}

For our existence result, we make use of the following Proposition that states the main properties of the classical fixed point index, for more details see~\cite{Amann-rev, guolak}. In what follows the closure and the boundary of subsets of a cone $\hat{P}$ are understood to be relative to~$\hat{P}$.

\begin{pro}\label{propindex}
Let $X$ be a real Banach space and let $\hat{P}\subset X$ be a cone. Let $D$ be an open bounded set of $X$ with $0\in D\cap \hat{P}$ and
$\overline{D\cap \hat{P}}\ne \hat{P}$. 
Assume that $T:\overline{D\cap \hat{P}}\to \hat{P}$ is a compact operator such that
$x\neq Tx$ for $x\in \partial (D\cap \hat{P})$. Then the fixed point index
 $i_{\hat{P}}(T, D\cap \hat{P})$ has the following properties:
 
\begin{itemize}

\item[$(i)$] If there exists $e\in \hat{P}\setminus \{0\}$
such that $x\neq Tx+\lambda e$ for all $x\in \partial (D\cap \hat{P})$ and all
$\lambda>0$, then $i_{\hat{P}}(T, D\cap \hat{P})=0$.

\item[$(ii)$] If $Tx \neq \lambda x$ for all $x\in
\partial  (D\cap \hat{P})$ and all $\lambda > 1$, then $i_{\hat{P}}(T, D\cap \hat{P})=1$.

\item[$(iii)$] Let $D^{1}$ be open bounded in $X$ such that
$(\overline{D^{1}\cap \hat{P}})\subset (D\cap \hat{P})$. If $i_{\hat{P}}(T, D\cap \hat{P})=1$ and $i_{\hat{P}}(T,
D^{1}\cap \hat{P})=0$, then $T$ has a fixed point in $(D\cap \hat{P})\setminus
(\overline{D^{1}\cap \hat{P}})$. The same holds if 
$i_{\hat{P}}(T, D\cap \hat{P})=0$ and $i_{\hat{P}}(T, D^{1}\cap \hat{P})=1$.
\end{itemize}
\end{pro}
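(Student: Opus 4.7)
\smallskip

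\noindent\textbf{Proof proposal.} The three statements are the standard normalization/homotopy/excision package for the Leray--Schauder--Amann fixed point index on a cone, so the plan is to reduce each assertion to the basic homotopy invariance, normalization and additivity properties of $i_{\hat P}$, which I shall take as given.

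For part $(i)$, the idea is a large-perturbation homotopy in the direction of $e$. First I would show that for any bounded set the operator $T+\tau e$ has no fixed points at all in $\overline{D\cap\hat P}$ once $\tau$ is large enough: since $T(\overline{D\cap\hat P})$ is relatively compact, hence bounded, any solution $x=Tx+\tau e$ satisfies $\|x\|\ge \tau\|e\|-\|T\|_\infty$, which eventually exceeds the diameter of $D$. Fix such a $\tau_0>0$ and consider the homotopy $H(t,x)=Tx+t\tau_0 e$ for $t\in[0,1]$. By hypothesis $x\neq Tx+\lambda e$ for every $x\in\partial(D\cap\hat P)$ and every $\lambda>0$, so $H$ has no fixed point on $\partial(D\cap\hat P)$. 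Homotopy invariance then gives $i_{\hat P}(T,D\cap\hat P)=i_{\hat P}(T+\tau_0 e,D\cap\hat P)=0$, the last equality by the solution property since $T+\tau_0 e$ is fixed-point-free on the whole set.

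For part $(ii)$, I would use the straight-line homotopy $H(t,x)=tTx$ for $t\in[0,1]$, whose admissibility needs to be checked on $\partial(D\cap\hat P)$. Because $0\in D$ and $D$ is open in $X$, the point $0$ is interior to $D\cap\hat P$ in $\hat P$, so $0\notin\partial(D\cap\hat P)$ and the case $t=0$ is fine. For $t=1$ admissibility is the standing hypothesis $x\neq Tx$ on the boundary. For $t\in(0,1)$ a fixed point $x=tTx$ would give $Tx=(1/t)x$ with $1/t>1$, contradicting the assumption $Tx\neq\lambda x$ on $\partial(D\cap\hat P)$ for $\lambda>1$. Hence homotopy invariance together with the normalization $i_{\hat P}(0,D\cap\hat P)=1$ (the constant zero map has $0$ as its unique fixed point in $D\cap\hat P$) yields $i_{\hat P}(T,D\cap\hat P)=1$.

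For part $(iii)$, which is the existence principle, I would invoke the additivity/excision property of the index: since $\overline{D^{1}\cap\hat P}\subset D\cap\hat P$ and $T$ has no fixed points on $\partial(D\cap\hat P)\cup\partial(D^{1}\cap\hat P)$, one has
\begin{equation*}
i_{\hat P}(T,D\cap\hat P)=i_{\hat P}(T,D^{1}\cap\hat P)+i_{\hat P}\bigl(T,(D\cap\hat P)\setminus\overline{D^{1}\cap\hat P}\bigr).
\end{equation*}
In either of the two cases in the statement the two known indices differ, so the index on the annular region is non-zero; the solution property of the index then produces a fixed point there.

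The main technical point I expect to be most delicate is the admissibility check in $(ii)$, specifically the verification that $0$ lies in the interior of $D\cap\hat P$ relative to $\hat P$ (used to exclude the $t=0$ level of the linear homotopy); this is where the hypotheses $0\in D\cap\hat P$ and $D$ open in $X$ together with $\overline{D\cap\hat P}\neq\hat P$ are really being used. Parts $(i)$ and $(iii)$ are, once the homotopy is set up, routine applications of the homotopy and additivity axioms.
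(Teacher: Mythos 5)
Your argument is correct, but note that the paper does not prove Proposition~\ref{propindex} at all: it is quoted as a known compendium of classical fixed point index properties, with references to Amann's review and Guo--Lakshmikantham. What you have written is essentially the standard proof from those sources (large-perturbation homotopy $Tx+t\tau_0 e$ plus the solution property for $(i)$, the linear homotopy $tTx$ with normalization for $(ii)$, additivity/excision for $(iii)$), and each step checks out; the only small inaccuracy is your closing remark that $\overline{D\cap\hat P}\neq\hat P$ is needed for the $t=0$ admissibility in $(ii)$ --- there only $0\in D$ with $D$ open is used, the other hypothesis serving to rule out the degenerate case of an empty relative boundary.
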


With these ingredients we can now state a result regarding the existence of positive solutions for the system~\eqref{nellbvp-intro}, that extends the results of Theorem~2.4 of~\cite{gi-tmna} to this new setting. In the sequel we denote by $\hat{1}$ the constant function equal to 1 on $\bar{\Omega}$.
\begin{thm}\label{thmsol}Let  $I=\prod_{i=1}^n [0,\rho_i]$  and assume the following conditions hold. 
\begin{itemize}

\item[(a)] For every $i=1,2,\ldots,n$, $w_{i}: P_I \to \mathbb{R}$ is continuous,
$$-\infty<\underline{w}_i :=\inf_{u\in P_I }w_{i}[u]\ \text{and}\ \overline{w}_i :=\sup_{u\in P_I }w_{i}[u]<+\infty.$$
\item[(b)] 
For every $i=1,2,\ldots,n$,  $f_i\in C(\bar{\Omega}\times I\times [\underline{w}_i, \overline{w}_i ])$ and $f_i\geq 0$. Set
$$
M_i:=\max_{(x,u,w)\in\bar{\Omega}\times I \times [\underline{w}_i, \overline{w}_i ] } f_i (x,u,w).
$$
\item[(c)] There exist $\delta \in (0,+\infty)$, $i_0\in \{1,2,\ldots,n\}$ and $\rho_0 \in (0,\displaystyle\min_{i=1..n}{\rho_i})$ such that $$f_{i_0}(x,u,w)\ge \delta u_{i_0},\ \text{for every}\ (x,u,w)\in \bar{\Omega}\times I_{0}\times  [\underline{w}_{i_{0}}^{0}, \overline{w}_{i_{0}}^{0} ],$$
where $I_{0}:=\prod_{i=1}^n [0,\rho_0]$ and
$$\underline{w}_{i_{0}}^{0} :=\inf_{u\in P_{I_{0}}}w_{i_{0}}[u] \leq \overline{w}_{i_{0}}^{0} :=\sup_{u\in P_{I_{0}}}w_{i_{0}}[u].$$

\item[(d)] For every $i=1,2,\ldots,n$, $\zeta_i \in C^{2-\delta_i+\hat{\mu}}(\partial\Omega)$, $\zeta_i\geq 0$, 
$h_{i}: P_I \to [0, +\infty )$ is continuous and bounded. We set 
$$\overline{h}_i:=\sup_{u\in \partial P_I }h_{i}[u].$$

\item[(e)]  For every $i=1,2,\ldots,n$ the following two inequalities are satisfied
\begin{equation*}%\label{parrange}
\frac{\mu_{i_{0}}}{\delta}\leq \lambda_{i_0}\  \text{and}\ 
\lambda_i M_i  \| K_i(\hat{1})\|_{\infty} + \eta_i \overline{h}_i\|\gamma_i\|_{\infty} \leq \rho_i.
\end{equation*}
\end{itemize}

Then the system~\eqref{nellbvp-intro} has a nonzero positive weak solution $u$ such that $$\rho_0\leq \|u\|\ \text{and}\ \|u_i\|_{\infty}\leq \rho_i, 
\ \text{for every}\ i=1,2,\ldots,n.$$
\end{thm}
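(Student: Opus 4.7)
The plan is to apply Proposition~\ref{propindex} to the operator $T+\Gamma$ on the cone $P=C(\bar{\Omega},\R_+^n)$ using two nested relatively open sets whose indices differ. Take
\[
D=\{u\in C(\bar{\Omega},\R^n): \|u_i\|_\infty<\rho_i \text{ for all }i\},\qquad D^0=\{u\in C(\bar{\Omega},\R^n): \|u\|<\rho_0\},
\]
so that, since $\rho_0<\min_i\rho_i$, one has $\overline{D^0\cap P}\subset D\cap P$ and $\overline{D\cap P}=P_I$. I would first verify that $T+\Gamma$ is a compact map from $P_I$ into $P$: compactness rests on that of each $K_i$, on the continuity of $f_i$, $w_i$, $h_i$, and on the fact that $\gamma_i h_i[u]$ ranges in a one-dimensional subspace. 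Positivity of $\Gamma u$ follows from $\gamma_i\geq 0$ (by the strong maximum principle applied to~\eqref{eqellipticbc} since $\zeta_i\geq 0$) combined with $h_i\geq 0$, while positivity of $Tu$ is given by $K_i(P)\subset P$ together with $f_i\geq 0$.

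Next I would establish $i_P(T+\Gamma,D\cap P)=1$ via Proposition~\ref{propindex}(ii). Assuming $(T+\Gamma)u=\tau u$ for some $u\in\partial(D\cap P)$ and $\tau>1$, pick an index $j$ with $\|u_j\|_\infty=\rho_j$. Since $u\in P_I$ hypothesis~(b) gives $F_j(u)\leq M_j\hat{1}$ pointwise, while $u\in\partial P_I$ yields $h_j[u]\leq\overline{h}_j$ by hypothesis~(d); combining these with the positivity of $K_j$ and $\gamma_j$ and invoking the second inequality in~(e),
\[
\tau\rho_j=\|\tau u_j\|_\infty\leq \lambda_j M_j\|K_j(\hat{1})\|_\infty+\eta_j\overline{h}_j\|\gamma_j\|_\infty\leq \rho_j,
\]
contradicting $\tau>1$.

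I would then show $i_P(T+\Gamma,D^0\cap P)=0$ via Proposition~\ref{propindex}(i) with $e=(0,\dots,\varphi_{i_0},\dots,0)\in P\setminus\{0\}$, the eigenfunction of~\eqref{igenfun} placed in the $i_0$-th slot. Suppose $u=(T+\Gamma)u+\sigma e$ with $u\in\partial(D^0\cap P)\subset P_{I_0}$ and $\sigma>0$. The $i_0$-th coordinate, together with hypothesis~(c), the linearity and positivity of $K_{i_0}$, the nonnegativity of $\gamma_{i_0} h_{i_0}[u]$, and the first inequality in~(e), yields
\[
u_{i_0}\geq \lambda_{i_0}\delta\, K_{i_0}u_{i_0}+\sigma\varphi_{i_0}\geq \mu_{i_0}K_{i_0}u_{i_0}+\sigma\varphi_{i_0}.
\]
Iterating this inequality $n$ times and using $\mu_{i_0}K_{i_0}\varphi_{i_0}=\varphi_{i_0}$ gives $u_{i_0}\geq n\sigma\varphi_{i_0}$ for every $n\in\N$, which contradicts the Archimedean property of the cone since $\sigma\varphi_{i_0}\neq 0$.

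Finally, Proposition~\ref{propindex}(iii) furnishes a fixed point $u$ of $T+\Gamma$ in $(D\cap P)\setminus\overline{D^0\cap P}$; by construction $\rho_0\leq\|u\|$ and $\|u_i\|_\infty\leq\rho_i$ for every $i$, which is the claimed nonzero positive weak solution. The delicate point will be the bookkeeping: ensuring that $u\in P_{I_0}$ precisely when invoking hypothesis~(c), and that $u\in\partial P_I$ precisely when using the bound $h_j[u]\leq\overline{h}_j$, which is why these hypotheses are stated on $P_{I_0}$ and $\partial P_I$ respectively rather than on the full cone.
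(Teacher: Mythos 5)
Your proposal is correct and follows essentially the same route as the paper: the same nested sets (relatively open in $P$, with closures $P_I$ and $P_{I_0}$), index $1$ on the large set via Proposition~\ref{propindex}$(ii)$ and the second inequality in $(e)$, index $0$ on the small set via $(i)$ with the eigenfunction of~\eqref{igenfun} and the first inequality in $(e)$, then $(iii)$; your variants (putting $\varphi_{i_0}$ in a single slot rather than using $(\varphi_1,\dots,\varphi_n)$, and iterating $u_{i_0}\ge \mu_{i_0}K_{i_0}u_{i_0}+\sigma\varphi_{i_0}$ directly instead of first extracting $u_{i_0}\ge\sigma\varphi_{i_0}$) are immaterial. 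The only point to add, as the paper does at the outset, is the case distinction: Proposition~\ref{propindex} requires $T+\Gamma$ to be fixed point free on $\partial(D\cap P)$ and $\partial(D^{0}\cap P)$ for the indices to be defined, and your conditions with $\tau>1$ and $\sigma>0$ do not exclude boundary fixed points; if such a fixed point exists on $\partial P_I\cup\partial P_{I_0}$ it already satisfies $\rho_0\le\|u\|$ and $\|u_i\|_\infty\le\rho_i$, so one is done directly, and otherwise your index argument applies verbatim.
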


\begin{proof}
Due to the assumptions above the operator $T+\Gamma$ maps $P_I$ into $P$ and is compact (by construction, the map $F$ is continuous and bounded and $\Gamma$  is a finite rank operator). If $T+\Gamma$ has a fixed point on $\partial {P_I} \cup \partial {P_{I_{0}}}$ we are done. If $T+\Gamma$ is fixed point free on $\partial {P_I} \cup \partial {P_{I_{0}}}$, then the fixed point index is defined and we can make use of Proposition~\ref{propindex}.

We now show, by contradiction, that
\begin{equation*} 
\sigma  u\neq Tu+\Gamma u\ \text{for every}\ u\in \partial P_{I}\
\text{and every}\  \sigma >1.
\end{equation*}
If this false, then there exist $u\in \partial P_{I}$ and $\sigma >1$ such that $\sigma  u= Tu+\Gamma u$. Since $u\in \partial P_{I}$ there exists $j$ such that $\| u_j\|_{\infty} = \rho_j$ and for every 
for every $x\in \overline{\Omega}$ we have
\begin{multline}\label{idx1in}
\sigma u_{j}(x)=
\lambda_j K_jF_j(u)(x) + \eta_j h_{j} [u]\gamma_j(x)\leq \| \lambda_j K_jF_j(u) + \eta_j h_{j} [u]\gamma_{j}\|_{\infty}\\
\leq \|\lambda_j K_j(M_j\hat{1}) \|_{\infty}+ \|\eta_j \overline{h}_{j}\gamma_{j}\|_{\infty} = \lambda_j M_j  \| K_j(\hat{1})\|_{\infty} + \eta_j \overline{h}_{j}\|\gamma_j\|_{\infty} \leq \rho_j.
\end{multline}
Passing to the supremum for $x\in \overline{\Omega}$ in~\eqref{idx1in} we get $\sigma  \rho_j\leq \rho_j$, a contradiction. By $(ii)$ of Proposition~\ref{propindex} we obtain $$i_{P}(T+\Gamma, P_I\setminus \partial {P_I} )=1.$$

We now show, by contradiction, that
\begin{equation*} 
u\neq Tu+\Gamma u +\sigma \varphi \ \text{for every}\ u\in \partial P_{I_0}\ \text{and every}\ \sigma  >0,
\end{equation*}
where $\varphi=(\varphi_1,\ldots,\varphi_n)$ and $\varphi_i$ given by~\eqref{igenfun}.

If this does not hold, there exists $u\in \partial P_{\rho_0}$ and $\sigma  >0$ such that
$
u= Tu+\Gamma u+\sigma  \varphi . 
$
Then we have $u\ge \sigma  \varphi$ and, in particular, $u_{i_0}\ge \sigma  \varphi_{i_0}$. For every $x\in \bar{\Omega}$ we have
\begin{multline*}
u_{i_0}(x)=(\lambda_{i_0}K_{i_0} F_{i_0} u)(x)+\eta_{i_0}  h_{i_0}[u]\gamma_{i_0}(x)   +\sigma  \varphi_{i_0}(x) \\
\ge (\lambda_{i_0}K_{i_0} \delta u_{i_0})(x) +\sigma  \varphi_{i_0}(x)\ge 
(\lambda_{i_0} \delta  K_{i_0}(\sigma  \varphi_{i_0}))(x) +\sigma  \varphi_{i_0}(x)\\
=  \frac{ \sigma  \lambda_{i_0} \delta}{\mu_{i_{0}}} \varphi_{i_0}  (x) +\sigma  \varphi_{i_0}(x)  \geq 2\sigma  \varphi_{i_0}(x).
\end{multline*}
By iteration we obtain, for $x\in \bar{\Omega}$,
$$
u_{i_0}(x)\ge n\sigma  \varphi_{i_0}(x) \ \text{for every}\ n\in\mathbb{N},
$$
which contradicts the boundedness of $u$. This gives, by $(i)$ of Proposition~\ref{propindex}, that $$i_{P}(T+\Gamma, P_{I_0} \setminus \partial {P_{I_{0}}})=0.$$

By means of $(iii)$ of Proposition~\ref{propindex}, $T+\Gamma$ has a fixed point in $P_I\setminus (\partial {P_I}\cup {P_{I_0}})$.
\end{proof}
We now provide a non-existence result which extends Theorem~2.7 of~\cite{gi-tmna}.
\begin{thm}\label{nonexthm}
Let $I=\prod_{i=1}^n [0,\rho_i]$ and 
assume that
for every $i=1,2,\ldots,n$ we have:
\begin{itemize}
\item
$f_i\in C(\bar{\Omega}\times I\times [\underline{w}_i, \overline{w}_i ])$ and there exist $\tau_i \in (0,+\infty)$ such that
$$
0\leq f_i (x,u,w)\leq \tau_i u_i,\ \text{for every}\ (x,u,w)\in\bar{\Omega}\times I\times [\underline{w}_i, \overline{w}_i ],
$$
\item
$\zeta_i \in C^{2-\delta_i+\hat{\mu}}(\partial\Omega)$, $\zeta_i\geq 0$, $h_{i}: P_I \to [0, +\infty )$ is continuous and there exist $\theta_i \in (0,+\infty)$
and
$$h_i[u]\leq \theta_i  \|u\|, \ \text{for every}\ u \in P_I,$$
\item  the following inequality holds
\begin{equation}\label{nonexineq}
  \lambda_i \tau_i   \| K_i(\hat{1})\|_{\infty} + \eta_i \theta_i   \|\gamma_i\|_{\infty}<1.
 \end{equation}
\end{itemize}
Then the system~\eqref{nellbvp-intro} has at most the zero solution in $P_{I}$. 
\end{thm}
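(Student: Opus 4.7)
The plan is to argue by contradiction. Suppose $u\in P_I$ is a weak solution of \eqref{nellbvp-intro} with $u\neq 0$, so that $\|u\|>0$. By definition of the norm on $C(\bar{\Omega},\R^n)$, there exists an index $j\in\{1,2,\ldots,n\}$ such that $\|u_j\|_\infty=\|u\|>0$. Since $u$ is a fixed point of $T+\Gamma$, for every $x\in\bar{\Omega}$ we have
\begin{equation*}
u_j(x)=\lambda_j\bigl(K_jF_j(u)\bigr)(x)+\eta_j h_j[u]\,\gamma_j(x).
\end{equation*}

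The next step is to estimate each term on the right-hand side. Using the hypothesis $f_j(x,u(x),w_j[u])\le\tau_j u_j(x)\le\tau_j\|u_j\|_\infty$ pointwise, and the fact that $K_j$ is a positive linear operator (because it leaves the cone $P$ invariant), one obtains $K_jF_j(u)\le\tau_j\|u_j\|_\infty K_j(\hat{1})$ in $P$, and hence $\|K_jF_j(u)\|_\infty\le\tau_j\|u_j\|_\infty\|K_j(\hat{1})\|_\infty$. For the boundary contribution, the assumption $h_j[u]\le\theta_j\|u\|=\theta_j\|u_j\|_\infty$ together with $\gamma_j\geq 0$ gives $\eta_j h_j[u]\|\gamma_j\|_\infty\le\eta_j\theta_j\|u_j\|_\infty\|\gamma_j\|_\infty$.

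Taking the supremum over $x\in\bar{\Omega}$ in the fixed-point identity for $u_j$ and combining the two estimates yields
\begin{equation*}
\|u_j\|_\infty\le\bigl(\lambda_j\tau_j\|K_j(\hat{1})\|_\infty+\eta_j\theta_j\|\gamma_j\|_\infty\bigr)\|u_j\|_\infty.
\end{equation*}
Since $\|u_j\|_\infty>0$, dividing through gives $1\le\lambda_j\tau_j\|K_j(\hat{1})\|_\infty+\eta_j\theta_j\|\gamma_j\|_\infty$, which contradicts the strict inequality \eqref{nonexineq} for $i=j$. Therefore $u\equiv 0$.

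I expect no serious obstacle: the only points requiring care are the use of positivity of $K_j$ to preserve the pointwise bound $F_j(u)\le\tau_j\|u_j\|_\infty\hat{1}$, and the observation that the component attaining the maximum of the norms is precisely the index where the strict inequality \eqref{nonexineq} needs to be invoked. Note that the condition \eqref{nonexineq} is assumed for every $i$, so in fact we only need it for this one index $j$, which is consistent with the conclusion.
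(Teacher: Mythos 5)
Your proposal is correct and follows essentially the same contradiction argument as the paper: pick the component $u_j$ attaining the norm, bound $F_j(u)$ by $\tau_j\|u_j\|_\infty\hat{1}$ and use positivity of $K_j$ together with $h_j[u]\le\theta_j\|u\|$, then take the supremum and invoke the strict inequality \eqref{nonexineq}. Your explicit choice of $j$ as the index where $\|u_j\|_\infty=\|u\|$ is in fact the precise choice the paper's estimate also relies on, so there is no substantive difference.
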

\begin{proof}
Suppose, for the sake of contradiction, there exists $u\in P_{I}\setminus\{0\}$ such that $u$ is a fixed point for $T+\Gamma$. 
Then there exists $j$ such that  $\|u_{j}\|_{\infty}=\sigma>0$. Then we have 
\begin{multline}\label{nonext}
u_{j}(x)=
\lambda_j K_jF_j(u)(x) + \eta_j h_{j} [u]\gamma_j(x)\leq \| \lambda_j K_jF_j(u) + \eta_j h_{j} [u]\gamma_j\|_{\infty}\\
\leq \|\lambda_j K_j(\tau_j \sigma \hat{1}) \|_{\infty}+ \|\eta_j \theta_j  \sigma\gamma_j\|_{\infty} = ( \lambda_j \tau_j   \| K_j(\hat{1})\|_{\infty} + \eta_j \theta_j  \|\gamma_j\|_{\infty}) \sigma < \sigma.
\end{multline}
By taking the supremum in~\eqref{nonext} for $x\in \bar{\Omega}$ we obtain $\sigma  < \sigma$, a contradiction.
\end{proof}

\section{Two examples}
In the this last Section we show the applicability of results above in the context of systems of nonlocal elliptic equations with functional BCs.
We begin by illustrating Theorem~\ref{thmsol}.
\begin{ex}\label{exex}
Take $\Omega=\{x\in \mathbb{R}^2 : \|x\|_2<1\}$ and consider the system
\begin{equation} \label{example}
\left\{
\begin{array}{ll}
-\Bigl(\int_{\Omega}e^{|(u_1,u_2)|}\,dx\Bigr)\Delta u=\lambda_1 e^{|(u_1,u_2)|},& \text{in }\Omega , \\
-\Bigl(\int_{\Omega}e^{u_1+u_2}\,dx\Bigr)\Delta v=\lambda_2 |(u_1,u_2)|^2, & \text{in }\Omega , \\
u_1=\eta_1h_{1}[(u_1,u_2)],\ u_2=\eta_2h_{2}[(u_1,u_2)], & \text{on }\partial \Omega ,%
\end{array}%
\right. 
\end{equation}%
where $|(u_1,u_2)|=\max \{ |u_1|,|u_2|\}$,
$$
h_{1}[(u_1,u_2)]=(u_1(0))^2+(u_2(0))^\frac{1}{2}\ \mbox{and} \ h_{2}[(u_1,u_2)]=(u_1(0))^\frac{1}{4}+\Bigl(\int_{\Omega}u_2(x)\,dx\Bigr)^2.
$$
We wish to show that, under some algebraic conditions on the parameters $\lambda_i$ and $\eta_i$,  the system~\ref{example} has a nonzero positive solution of norm less or equal to~$1$. 

First of all note that $K(\hat{1})=\frac{1}{4}(1-x_1^2-x_2^2)$, where  $x=(x_1,x_2)$, and  $\|K(\hat{1})\|_{\infty}=\frac{1}{4}$. We fix $\rho_1,\rho_2=1$ and consider 
\begin{align*}
f_1(u_1,u_2, w_1[u,u_2]):=\lambda_1 e^{|(u_1,u_2)|}w_1[u_1,u_2], \ \text{where}\ w_1[u_1,u_2]=\Bigl(\int_{\Omega}e^{|(u_1,u_2)|}\,dx\Bigr)^{-1}, \\ 
f_2(u_1,u_2, w_2[u,u_2]) :=\lambda_2 |(u_1,u_2)|^2w_2[u_1,u_2],\ \text{where}\ w_2[u_1,u_2]=\Bigl(\int_{\Omega}e^{u_1+u_2}\,dx\Bigr)^{-1}.
\end{align*}
In this case we have  $$[\underline{w}_1, \overline{w}_1 ]=[{(e\pi)}^{-1},{\pi}^{-1}], [\underline{w}_2, \overline{w}_2 ]=[{(e^2\pi)}^{-1},{\pi}^{-1}]$$
and therefore we get
\begin{align*}
M_1 =f_1(1,1, {\pi}^{-1})=e/ \pi,&\quad \bar{h}_{1}\leq 2,\\ 
M_2=f_2(1,1, {\pi}^{-1}) =1/ \pi,&\quad \bar{h}_{2}\leq 1+\pi^2.
\end{align*}
Furthermore note that  $f_1$ satisfies the condition~$(c)$ in Theorem~\ref{thmsol} for $\rho_0$ sufficiently small, due to the behaviour near the origin. 

By Theorem~\ref{thmsol} the system~\eqref{example} has a nonzero positive solution $(u_1,u_2)$
such that $0<\|(u_1,u_2)\|\le 1$ for every $\lambda_1,\lambda_2, \eta_1, \eta_2>0$ with
\begin{equation}\label{ineqex1}
\frac{ e \lambda_1}{4 \pi}+ 2 \eta_1 \leq 1\ \text{and}\ \frac{  \lambda_2}{4 \pi}+ (1+\pi^2) \eta_2 \leq 1.
\end{equation}
The inequality~\eqref{ineqex1} is satisfied, for example, when $\lambda_1=1/3, \eta_1=1/4, \lambda_2=1/5, \eta_2=1/15$.
\end{ex}
In this last example we illustrate the applicability of Theorem~\ref{nonexthm}.
\begin{ex}%\label{nonex}
Take $\Omega=\{x\in \mathbb{R}^2 : \|x\|_2<1\}$ and consider the system
\begin{equation} \label{sysnonex}
\left\{
\begin{array}{ll}
-\Bigl(\int_{\Omega}e^{|(u_1,u_2)|}\,dx\Bigr)\Delta u=\lambda_1 (u_1)^2\sin(u_2) , & \text{in }\Omega , \\
-\Bigl(\int_{\Omega}e^{u_1+u_2}\,dx\Bigr)\Delta v=\lambda_2 (u_2)^4\cos(u_1), & \text{in }\Omega , \\
u_1=\eta_1h_{1}[(u_1,u_2)],\ v=\eta_2h_{2}[(u_1,u_2)], & \text{on }\partial \Omega ,%
\end{array}%
\right. 
\end{equation}%
where $h_{1}[(u_1,u_2)]=u_1(0)+(u_2(0))^2$ and $h_{2}[(u_1,v)]=u_1(0)+(u_2(0))^3$.  As in Example~\ref{exex} we set
\begin{align*}
f_1(u_1,u_2, w_1[u_1,u_2]):=\lambda_1 (u_1)^2\sin(u_2) w_1[u_1,u_2],\ \text{where}\ w_1[u_1,u_2]=\Bigl(\int_{\Omega}e^{|(u_1,u_2)|}\,dx\Bigr)^{-1}, \\ 
f_2(u_1,u_2, w_2[u_1,u_2]) :=\lambda_2 (u_2)^4\cos(u_1) w_2[u_1,u_2],\ \text{where}\ w_2[u_1,u_2]=\Bigl(\int_{\Omega}e^{u_1+u_2}\,dx\Bigr)^{-1}.
\end{align*}
Fix $I=[0, \frac{\pi}{4}]\times [0, \frac{\pi}{2}]$ and
note that in $\bar{\Omega}\times I$ we have
$$
0\leq (u_1)^2\sin(u_2)\leq \frac{\pi}{4}u_1,\ 0\leq (u_2)^4\cos(u_1)\leq \frac{\pi^3}{8} u_2.
$$

Furthermore for $(u_1,u_2)\in P_{I}$, we have
$$
0\leq h_{1}[(u_1,u_2)]\leq \bigl( \frac{\pi}{2}+1 \bigr)\|(u_1,u_2)\|, \ 0\leq h_{2}[(u_1,u_2)]\leq \bigl( \frac{\pi^2}{4}+1 \bigr)\|(u_1,u_2)\|.
$$
Thus, in this case, the condition~\eqref{nonexineq} is satisfied if we have
\begin{equation}\label{nonexineqex}
\frac{1}{4e}  \lambda_1   + \bigl( \frac{\pi}{2}+1 \bigr) \eta_1  <1\quad \text{and} \quad  \frac{\pi^2}{8e^2}\lambda_2   +  \bigl( \frac{\pi^2}{4}+1 \bigr) \eta_2 <1.
\end{equation}
Since
the trivial solution is a solution of~\eqref{sysnonex}, as long as the inequality~\eqref{nonexineqex} is satisfied (for example when $\lambda_1=1/2, \eta_1=1/3, \lambda_2=1/2, \eta_2=1/4$),
 by Theorem~\ref{nonexthm} we obtain that the system~\eqref{sysnonex} admits only the trivial solution in~$P_{I}$.
\end{ex}

\section*{Acknowledgements}
This manuscript was presented at the International Workshop on Nonlinear Dynamical Systems and Functional Analysis held in Brasilia (Brazil) in August 2018.
G.~Infante would like to thank the Workshop Organizers for their warm hospitality and generous support.
G.~Infante was partially supported by G.N.A.M.P.A. - INdAM (Italy).

\end{document}